\tikzset{negated/.style={
        decoration={markings,
            mark= at position 0.5 with {
                \node[transform shape] (tempnode) {$\backslash$};
            }
        },
        postaction={decorate}
    }
}
 \newcommand{\newdownarrow}{{{\rlap{$\ $}\hbox{$\downarrow$}}}}
 \newcommand{\newuparrow}{{{\rlap{$\ $}\hbox{$\uparrow$}}}}
 \newcommand{\twoheaddownarrow}{{\rlap{\rlap{$\ $}\raise .25ex\hbox{$\downarrow$}}\raise-.25ex\hbox{$\downarrow$}}}
 \newcommand{\twoheaduparrow}{{\rlap{\rlap{$\ $}\raise .25ex\hbox{$\uparrow$}}\raise-.25ex\hbox{$\uparrow$}}}
\newcommand{\set}[1]{\{\,#1\,\}}
\newcommand{\tbigwedge}{\mathop{\textstyle \bigwedge }}
\newcommand{\tbigcap}{\mathop{\textstyle \bigcap }}
\newcommand{\tbigcup}{\mathop{\textstyle \bigcup }}
\newcommand{\tbigvee}{\mathop{\textstyle \bigvee }}
 \newcommand{\cat}[1]{\ensuremath{\mathsf{#1}}} 
\newcommand*{\@old@slash}{}\let\@old@slash\slash
\def\slash{\relax\ifmmode\delimiter"502F30E\mathopen{}\else\@old@slash\fi}
\def\N{\mathbb{N}}
\def\SS{\mathsf{S}}
\def\cl{\mathfrak{c}}
\def\op{\mathfrak{o}}
\def\bl{\mathfrak{b}}
\newtheorem{theorem}{Theorem}[section]
\newtheorem{proposition}[theorem]{Proposition}
\newtheorem{lemma}[theorem]{Lemma}
\newtheorem{corollary}[theorem]{Corollary}
\theoremstyle{definition}
\theoremstyle{remark}
\newtheorem{remark}[theorem]{Remark}
\newtheorem{remarks}[theorem]{Remarks}
\title[The DeMorganization of a locale]{The DeMorganization of a locale}
\author{Igor Arrieta}
\address{School of Computer Science \\
University of Birmingham, B15 2TT, Birmingham, UK}
\email{i.arrietatorres@bham.ac.uk}
\keywords{Locale, frame, dense sublocale, De Morgan law, extremally disconnected, DeMorganization}
\subjclass[2020]{18F70 (primary); 06D22, 54G05, 06D30 (secondary)}
\thanks{The author acknowledges support from the Basque Government (grant IT1483-22 and a postdoctoral fellowship of the Basque Government, grant POS-2022-1-0015).}
\begin{document}

\maketitle

\begin{abstract}
In 2009, Caramello proved that each topos has a largest dense subtopos whose internal logic satisfies De Morgan law (also known as the law of the weak excluded middle). This finding implies that every locale has a largest dense extremally disconnected sublocale, referred to as its DeMorganization. In this paper, we take the first steps in exploring the DeMorganization in the localic context, shedding light on its geometric nature by showing that it is always a fitted sublocale and by providing a concrete description. Explicit examples of DeMorganizations for toposes that do not satisfy De Morgan law are rather difficult to find. We present a contribution in that direction, with the main result of the paper  showing that for any metrizable locale (without isolated points), its DeMorganization coincides with its Booleanization.
This, in particular, implies that any extremally disconnected metric locale (without isolated points) must be Boolean, generalizing a well-known result for topological spaces to the localic setting.
\end{abstract}

\section{Introduction}
Regular subobjects in the category of locales are known as sublocales or point-free subspaces. Given a topological space $X$,  there are typically more sublocales in its frame of opens $\Omega(X)$ than subspaces in $X$. In fact, some of these sublocales are genuinely point-free in the sense that they may not have any point at all. An important example of this situation is the \emph{Booleanization} of a locale $L$. This can be characterized either as 
\begin{enumerate}[label=\textup{(\arabic*)},leftmargin=2.0\parindent]
\item The least dense sublocale of $L$ (Isbell's density theorem --- see \cite{ATOM}), or
\item The largest Boolean dense sublocale of $L$,
\end{enumerate}
which together imply that the Booleanization is in fact the unique Boolean dense sublocale of $L$. This has of course no counterpart in the category of topological spaces and, moreover,
for a large class of spaces, including the Hausdorff ones without isolated points, the Booleanization does not contain any point at all.  This difference between classical and localic topology is  highlighted by Johnstone in \cite{pointles}: "\emph{If you want to "sell" locale theory to a classical topologist, it's a good idea to begin by asking him to imagine a world in which an intersection of dense subspaces would always be dense; once he contemplated some of the wonderful consequences that would flow from that result, you can tell him that that world is exactly the category of locales.}"

On the other hand, Caramello \cite{DM} proved that every topos has a largest dense De Morgan subtopos (see also \cite{FIELDS}), called its DeMorganization. By taking into account the well-known correspondence between sublocales of a locale and  subtoposes of the associated localic topos, it follows immediately that every locale has a largest dense and extremally disconnected sublocale, also referred to as its DeMorganization  in the sequel.

To the best of our knowledge, this construction has  not been studied in the localic context, with the exception of Johnstone's talk at TACL 2022 in Coimbra \cite{johtalk}; and it is the aim of this paper to fill that gap.  We show that the DeMorganization of a locale $L$	admits a simple and transparent description when sublocales are regarded as specific subsets of $L$ (cf. \cite{PP12}), which yields the conclusion that it is always a fitted sublocale of $L$ (the point-free counterpart of the notion of saturated subspace of a space).

 Motivated by the classical fact that a metrizable extremally disconnected space must be discrete (see e.g. \cite[14N(2)]{gilman}), we then study the DeMorganization of a metric locale. We point out here that the class of metrizable \emph{locales} is fairly more general than that of metrizable \emph{spaces} (as opposed to the case of \emph{complete} metric locales, which are always spatial under classical logic --- cf. \cite{ATOM}).

In fact, it is not easy in general to compute the DeMorganization of a given topos. As Caramello and Johnstone note, "\emph{Explicit examples of DeMorganizations, for toposes which do not satisfy De Morgan’s law, seem to be rather hard to find}" \cite[p.~2145]{FIELDS}.

The main result of that paper presents a contribution in that direction by proving that for any metrizable locale (without isolated points), its DeMorganization coincides with its Booleanization. In particular, an extremally disconnected metrizable locale (without isolated points) must be Boolean.

The paper is organized as follows. In Section~\ref{sect2} we provide the necessary preliminaries on localic topology, and in Section~\ref{sect3} we make a few observations about the Booleanization of a locale, which are useful to the remainder of the paper. In Section~\ref{sect4} we explore the DeMorganization and provide a concrete description of it. Finally, in Section~\ref{sect5} we study the DeMorganization of metrizable locales.\\[2mm]
\noindent\textbf{Acknowledgement.} I am grateful to Professor Peter Johnstone for useful discussions on this topic following his TACL talk. I also thank the anonymous referee for their suggestions, which have improved the presentation of the paper.

\section{Preliminaries}\label{sect2}
Our notation and terminology regarding the categories of frames and locales will be that of \cite{PP12} (cf. also \cite{stone}). The Heyting operator in a frame $L$, right adjoint to the meet operator, will be denoted by $\to$; for each $a\in L$, $a^*=a\to 0$ is the \emph{pseudocomplement} of $a$. An element $a\in L$ is said to be \emph{dense} if $a^*=0$, and a frame homomorphism $h\colon L\to M$ is \emph{dense} if $h(a)=0$ implies $a=0$ for all $a\in L$.

\subsection{Some Heyting rules}
For the reader's convenience, we list here some of the properties satisfied by the Heyting operator in a frame $L$. For any $a,b,c\in L$ and $\{a_i\}_{i\in I}\subseteq L$, the following hold:
\begin{enumerate}[label=\textup{(H\arabic*)},leftmargin=2.0\parindent]
\item \label{H1} $1\to a=a$\textup;
\item \label{H2} $a\leq b$ if and only if $a\to b=1$\textup;
\item \label{H3} $a\leq b\to a$\textup;
\item \label{H4} $a\to b=a\to (a\wedge b)$\textup;
\item \label{H5} $a\wedge (a\to b)=a\wedge b$\textup;
\item \label{H6} $a\wedge b=a\wedge c$ if and only if $a\to b=a\to c$\textup;
\item \label{H7} $(a\wedge b)\to c=a\to (b\to c)= b\to (a\to c)$\textup;
\item \label{H8} $a=(a\vee b)\wedge (b\to a)$\textup;
\item \label{H9} $a\leq (a\to b)\to b$\textup;
\item \label{H10} $((a\to b)\to b)\to b=a\to b$\textup;
\item \label{H11} $(\tbigvee_i a_i)\to b=\tbigwedge_i (a_i\to b)$\textup;
\item\label{H12} $b\to (\tbigwedge_i a_i)=\tbigwedge_i(b\to a_i)$\textup.
\end{enumerate}

\subsection{Sublocales}\label{subsect.sublocales}

A \emph{sublocale} of a locale $L$ is a subset $S\subseteq L$ closed under arbitrary meets such that
\[\forall a\in L,\ \ \ \forall s\in S,\  \ \ a\to s\in S.\]
These are precisely the subsets of $L$ for which the embedding $j_S\colon S\hookrightarrow L$ is a morphism of locales, where by a morphism of locales we mean a map that is the right adjoint of a frame homomorphism. Sublocales of $L$ are in one-to-one correspondence with the regular subobjects (equivalently, extremal subobjects) of $L$ in \cat{Loc}. If $\nu_S$ denotes the associated frame surjection, then for any $a\in L$ and $s\in S$ one has
\begin{equation}\label{LocM}\tag{LM}
\nu_S(a)\to s=a\to s.\end{equation}

The system $\SS  (L)$ of all sublocales of $L$, partially ordered by inclusion, is a coframe \cite[Theorem~III.3.2.1]{PP12}, that is, its dual lattice is a frame.  Infima and suprema are given by
\[
\tbigwedge_{i\in I}S_i=\tbigcap_{i\in I}S_i, \quad \tbigvee_{i\in I}S_i=\set{\tbigwedge M\mid M\subseteq\tbigcup_{i\in I} S_i}.
\]
The least element is the sublocale $\mathsf{O}=\{1\}$ and the greatest element is the entire locale $L$. For any $a\in L$, the sublocales
\[
\mathfrak{c}_L(a)=\newuparrow  a=\set{b\in L\mid b\ge a}\ \text{ and }\ \mathfrak{o}_L(a)=\set{a\to b\mid b\in L}
\]
are the \emph{closed} and \emph{open} sublocales of $L$, respectively (that we shall denote simply by $\mathfrak{c}(a)$ and $\mathfrak{o}(a)$ when there is no danger of confusion). For each $a\in L$, $\mathfrak{c}(a)$ and $\mathfrak{o}(a)$ are
complements of each other in $\SS(L)$
and satisfy the expected identities
\begin{equation*}{\label{identities.basic}}
\tbigcap_{i\in I} \mathfrak{c}(a_i)=\cl(\tbigvee_{i\in I} a_i),\quad \cl(a)\vee\cl(b)=\cl(a\wedge b),
\end{equation*}
\[\tbigvee_{i\in I}\op(a_i)=\op(\tbigvee_{i\in I} a_i) \quad\mbox{ and }\quad \op(a)\cap \op(b)=\op(a\wedge b).
\]

Given a sublocale $S$ of $L$, its \emph{closure}, denoted by $\overline{S}$, is the smallest closed sublocale containing it. In this context, the formula $\overline{S}=\cl(\tbigwedge S)$ holds. A sublocale $S$ is \emph{dense} if $\overline{S}=L$ --- i.e. iff $\tbigwedge S=0$. Therefore, since sublocales are closed under the Heyting operator,  in a dense sublocale pseudocomplementation is inherited from the ambient locale.
Note also that the sublocale $\op(a)$ is dense iff $a$ is a dense element (that is $a^*=0$).

A sublocale is said to be \emph{fitted} if it is the intersection of all the open sublocales containing it. It is easy to check that a sublocale is fitted if and only if it is an intersection of a family of open sublocales. Hence, it constitutes a (non-conservative) point-free extension of the notion of saturated subspace. If $S$ is a sublocale  of $L$, the \emph{fitting} of $S$ is the intersection of the open sublocales containing $S$, that is, the smallest fitted sublocale containing $S$ (cf. \cite{Arrieta,otherclosure} for more information).

A \emph{point} of a locale is an element $p\in L$ such that $p\ne 1$ and $a\wedge b \leq p$ implies $a\leq p$ or $b\leq p$. Then, the set $\bl(p)=\{1,p\}$ is a sublocale. We say that $p$ is \emph{isolated} if the sublocale $\bl(p)$ is open.

\subsection{Extremally disconnected locales}\label{ext}

A locale is said to be \emph{extremally disconnected} \cite{PJmor,Arrieta0} if the second De Morgan law is satisfied in $L$ --- i.e. if
$$(a\wedge b)^* = a^* \vee b^* \qquad \text{ for all } a,b\in L, $$
or equivalently if the relation
$$a^*\vee a^{**}=1 \qquad \text{ for all }a\in L$$
holds in $L$. It is easy to show that a locale $L$ is extremally disconnected if and only if the closure of any open sublocale is open. Therefore, extremal disconnectedness is a conservative extension of the homonymous topological property. Furthermore, the internal logic of a topos of sheaves on a locale $L$ satisfies De Morgan law  if and only if $L$ is extremally disconnected \cite{SKET}.

\subsection{Metric locales}
Since we shall make use of the notion of metric locale via diameters, we briefly recall it here (see \cite{diam1,PP12} for more information). Let $L$ be a locale. A \emph{diameter} on $L$ is a map $d\colon L\to [0,+\infty]$ satisfying the following properties:
\begin{enumerate}[label=\textup{(D\arabic*)},leftmargin=2.0\parindent]
\item \label{D1} $d(0)=0$,
\item \label{D2} $a\leq b$ implies $d(a)\leq d(b)$,
\item \label{D3} $a\wedge b\ne 0$ implies $d(a\vee b)\leq d(a)+d(b)$,
\item \label{D4} For every $\varepsilon>0$, the set $$U_{\varepsilon}^d =\{ a\in L\mid d(a)<\varepsilon\}$$ is a cover of $L$.
\end{enumerate}
The pair $(L,d)$ is called a \emph{pre-metric locale}. Furthermore,  the diameter $d$ is said to be \emph{admissible} if for every $a\in L$, 
$$a=\tbigvee\{ b\in L\mid b\lhd_\varepsilon a,\ \varepsilon>0\}$$
where $b\lhd_\varepsilon a$ means that  for all $c\in L$ with $d(c)<\varepsilon$, $c\wedge b\ne 0$ implies $c\leq  a$. Note that, by virtue of \ref{D4}, the relation $b\lhd_\varepsilon a$ implies $b\leq a$.
If $d$ is an admissible diameter on a pre-metric locale $L$, then the pair $(L,d)$ is said to be a \emph{metric locale}. If $(L,d)$ is a metric locale, then $L$ is \emph{regular}, that is, for any $a\in L$, the relation $a=\tbigvee\set{b\in L\mid b\prec a}$ holds, where $b\prec a$ means $b^*\vee a=1$ (in fact, $L$ is even \emph{completely regular}, but we shall not need this fact).

\section{The Booleanization of a locale}\label{sect3}

Let $L$ be a locale. Recall that its \emph{Booleanization} can be described as the least dense sublocale; or equivalent as its largest (and thus unique) Boolean dense sublocale (see \cite{BP1} for a detailed study). Explicitly, when we look at a sublocale  of $L$ as a subset of $L$ (cf. Subsection~\ref{subsect.sublocales}),  one has
 $$B_L=\{ a\in L\mid a=a^{**}\}=\{a^*\mid a\in L\}.$$ With regard to the geometric nature of the Booleanization, we point out that the fact that $B_L$ is always fitted might be well known; but we have not found this result in the literature:

\begin{proposition}\label{boolean1}
For any locale $L$, one has
$$B_L=\tbigcap \{\op(a) \mid a\in L,\,a^*=0\}$$
--- i.e. $B_L$ is the intersection of all the dense open sublocales. In particular, $B_L$ is fitted.
\end{proposition}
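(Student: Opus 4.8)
The plan is to prove the claimed identity by establishing inclusion in both directions, using the two known descriptions of the Booleanization, namely $B_L=\{a^*\mid a\in L\}$ as a set and the open-sublocale formula $\op(a)=\{a\to b\mid b\in L\}$. Since each dense open sublocale $\op(a)$ (with $a^*=0$) is itself a sublocale containing $B_L$ exactly when $B_L\subseteq \op(a)$, the real content is to check these two facts: that $B_L$ is contained in every dense open sublocale, and that it is \emph{not} properly contained in their intersection.

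First I would show $B_L\subseteq\tbigcap\{\op(a)\mid a^*=0\}$. By density of $\op(a)$ and the remark in Subsection~\ref{subsect.sublocales} that a dense sublocale inherits pseudocomplementation from $L$, the Booleanization $B_L$, being the \emph{least} dense sublocale, is contained in every dense sublocale of $L$; in particular it sits inside each dense open $\op(a)$. This already gives one inclusion essentially for free from Isbell's density theorem. Alternatively, and more explicitly, one can verify directly that an element of the form $x=x^{**}$ lies in $\op(a)$ whenever $a^*=0$: since $\op(a)=\{a\to b\mid b\in L\}$ and $a\to x=a\to(a\wedge x)$ by \ref{H4}, it suffices to observe that $x=a\to x$ when $a$ is dense, which follows because $a\to x\ge x$ by \ref{H3} while the reverse inequality uses $a^{**}=1$ together with the Heyting rules.

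The more substantive direction is the reverse inclusion $\tbigcap\{\op(a)\mid a^*=0\}\subseteq B_L$. Here I would take an arbitrary $x$ lying in every dense open sublocale and argue that $x=x^{**}$. The natural test element is $a=x^*$ when $x^*$ is dense, but in general $x^*$ need not be dense; the right move is to consider the dense element $a=x\vee x^*$, whose pseudocomplement is $(x\vee x^*)^*=x^*\wedge x^{**}=0$. Since $x\in\op(x\vee x^*)$ and $\op(x\vee x^*)=\{(x\vee x^*)\to b\mid b\in L\}$, we may write $x=(x\vee x^*)\to b$ for some $b$, and then use \ref{H8}, which states $x^{**}=(x^{**}\vee x^*)\wedge(x^*\to x^{**})$, together with the density of $x\vee x^*$ and $x\vee x^*\le x^{**}\vee x^*$ to force $x=x^{**}$. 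The key identity $x^{**}=(x\vee x^*)\to x$ can be extracted from the Heyting rules \ref{H7} and \ref{H11}, since $(x\vee x^*)\to x=(x\to x)\wedge(x^*\to x)=x^*\to x=x^{**}$, the last step being a standard consequence of \ref{H9} and \ref{H10}.

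I expect the main obstacle to be pinning down precisely the element that witnesses membership failure outside $B_L$: one must produce, for each $x\ne x^{**}$, a single dense open sublocale missing $x$, and the cleanest choice is $\op(x\vee x^*)$ as above, for which the computation $(x\vee x^*)\to x=x^{**}\ne x$ shows $x\notin\op(x\vee x^*)$. Once the identity $(x\vee x^*)\to x=x^{**}$ is established via the Heyting calculus, both inclusions close and the equality follows. The final sentence, that $B_L$ is fitted, is then immediate: $B_L$ has been exhibited as an intersection of open sublocales, which is by definition the property of being fitted.
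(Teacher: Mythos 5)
Your proof is correct, and its substantive half is exactly the paper's argument: for the inclusion $\tbigcap\{\op(a)\mid a^*=0\}\subseteq B_L$, both you and the paper test an element $x$ of the intersection against the dense element $x\vee x^*$ and compute $(x\vee x^*)\to x=(x\to x)\wedge(x^*\to x)=x^*\to x=x^{**}$, forcing $x=x^{**}$. Where you genuinely differ is the forward inclusion: the paper proves $B_L\subseteq\op(b)$ for dense $b$ by a direct Heyting computation (showing $b\to a\leq a$ whenever $a=a^{**}$, via $(b\to a)^*=(b\wedge a^*)^{**}=a^*$), whereas you get it for free from Isbell's density theorem --- $B_L$ is the \emph{least} dense sublocale, and every dense open sublocale is in particular a dense sublocale. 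Your route is shorter and arguably the natural one given that the paper has already stated that characterization of $B_L$; the paper's computation buys self-containedness, using only the explicit description $B_L=\{a\in L\mid a=a^{**}\}$ and the Heyting rules. Two small blemishes in your write-up, neither fatal: the identity $x^*\to x=x^{**}$ follows from \ref{H4} (as $x^*\to x=x^*\to(x^*\wedge x)=x^*\to 0$), not from \ref{H9} and \ref{H10}; and the detour through \ref{H8} in your middle paragraph is unnecessary --- your own fixed-point computation already closes the argument, using the standard fact (immediate from \ref{H7}) that $\op(a)$ consists exactly of the fixed points of $a\to(-)$, a fact which the paper's proof also uses implicitly and at the same level of rigor.
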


\begin{proof}
First, let us verify the  inclusion ``$\subseteq$''. Let $a\in L$ with $a=a^{**}$ and $b\in L$ with $b^*=0$. We have to check that $b\to a\leq a$. But $b\to a=b\to a^{**}= (b\wedge a^*)^*$ by \ref{H7}, so $b\to a\leq a$ if and only if $b\to a\leq a^{**}$ if and only if  $(b\to a)\wedge a^*=0$ if and only if  $a^*\leq (b\to a)^{*}=(b\wedge a^{*})^{**}$, and the last relation holds because double pseudocomplement commutes with finite meets and $b$ is dense.

For the reverse inclusion, let $b\in L$ with $b\in \op(a)$ for any dense $a\in L$. Choose $a=b\vee b^*$, which is dense --- i.e. $a^*=0$. Then,  $b\in \op(b\vee b^*)$ and so by \ref{H11}, \ref{H2} and \ref{H4} one has $(b\vee b^*)\to b=b^*\to b= b^*\to (b^*\wedge b)=b^*\to 0=b^{**}\leq b$, and hence $b\in B_L$.
\end{proof}

\noindent{\textbf{Observation.}} Note that an element $a\in L$ is dense if and only if it is of the form $b\vee b^*$ for some $b\in L$. Indeed, a dense element is of that form by choosing $b=a$; and conversely  $(b\vee b^*)^*=b^*\wedge b^{**}=0$. Hence, we alternatively have
$$B_L=\tbigcap_{a\in L}\op(a\vee a^*).$$
This alternative expression will make the comparison with the DeMorganization transparent.

\section{The DeMorganization of a locale}\label{sect4}

Caramello \cite{DM} showed that each topos has a largest dense De Morgan subtopos. When applied to a topos of sheaves over a locale, it follows immediately that every locale has a largest dense extremally disconnected sublocale.  The first steps to study the localic construction were taken in \cite{johtalk}, where a proof  of its existence was given based on fiberwised closed nuclei. Here, we give a simpler and transparent proof based on sublocales, which highlights the geometric nature of the construction.

A frame homomorphism $h\colon L\to M$ is said to be \emph{nearly open} if it commutes with pseudocomplementation --- i.e. if $h(a^*)=h(a)^*$ for any $a\in L$. The following is well known (see \cite{nearlyopen}), but we include a short proof for the sake of completeness:

\begin{lemma}
A dense surjection of frames is nearly open.
\end{lemma}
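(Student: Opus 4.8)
The plan is to prove the two inequalities $h(a^*)\le h(a)^*$ and $h(a)^*\le h(a^*)$ separately, the first of which is purely formal and the second of which is where density and surjectivity enter. For the first, I would note that $a\wedge a^*=0$ for every $a\in L$, so since $h$ preserves finite meets and $h(0)=0$, we obtain $h(a)\wedge h(a^*)=h(a\wedge a^*)=0$; by the defining (adjoint) property of the pseudocomplement in $M$ this gives $h(a^*)\le h(a)^*$ at once. This direction uses nothing beyond the fact that $h$ is a frame homomorphism.

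For the reverse inequality I would exploit surjectivity to realise $h(a)^*$ as an image. Since $h$ is onto, pick $c\in L$ with $h(c)=h(a)^*$. Then $h(c\wedge a)=h(c)\wedge h(a)=h(a)^*\wedge h(a)=0$, and now density of $h$ forces $c\wedge a=0$, i.e.\ $c\le a^*$. Applying the order-preserving map $h$ yields $h(a)^*=h(c)\le h(a^*)$, as required. Combining the two inequalities gives $h(a^*)=h(a)^*$ for every $a\in L$, which is precisely near-openness.

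There is no serious obstacle here: the only nontrivial move is the interplay between surjectivity, used to choose a preimage $c$ of $h(a)^*$, and density, used to conclude $c\wedge a=0$ from $h(c\wedge a)=0$. Neither hypothesis can be dispensed with, since a non-surjective or non-dense homomorphism can easily fail to commute with pseudocomplements. If one prefers an argument phrased via the right adjoint $h_*$ to $h$, the same conclusion follows from the identities $h\,h_*=\mathrm{id}_M$ (surjectivity) and $h_*(0)=0$ (density), together with the fact that $h_*$ preserves meets; but the elementary preimage argument above seems the most transparent and stays entirely within the language used so far.
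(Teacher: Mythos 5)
Your proof is correct and follows essentially the same argument as the paper: the easy inequality $h(a^*)\le h(a)^*$ from preservation of finite meets and $0$, and the reverse inequality by choosing a preimage $c$ of $h(a)^*$ via surjectivity and using density to conclude $c\wedge a=0$. The closing remark about the right adjoint $h_*$ is a nice observation but is not needed; the main argument matches the paper's proof step for step.
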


\begin{proof}
Let $h\colon L\to M$ be a dense  surjection of frames and $a\in L$. The inequality $h(a^*)\leq h(a)^*$ holds because $h$ preserves binary meets and the least element. For the reverse one, since $h$ is surjective, let  $b\in L$ with $h(a)^*=h(b)$.  Then $h(a\wedge b)=h(a)\wedge h(b)=0$ and by density $a\wedge b=0$, so $b\leq a^*$ and $h(a)^*=h(b)\leq h(a^*)$, as required.
\end{proof}

We are now ready to prove the main of result of this section:

\begin{proposition}\label{dm}
For any locale $L$, the sublocale $$M_L:=\tbigcap_{a\in L}\op(a^*\vee a^{**})$$
is the largest dense extremally disconnected sublocale of $L$.
\end{proposition}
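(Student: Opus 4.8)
The plan is to read both required properties straight off the defining intersection, exploiting the localic modularity identity \eqref{LocM} together with the fact that a dense sublocale inherits pseudocomplements from $L$; the two halves of the argument will be near-mirror images. First I would dispose of density. Using \ref{H11} (with $b=0$) and $a^{***}=a^*$ one computes $(a^*\vee a^{**})^*=a^{**}\wedge a^{***}=a^{**}\wedge a^*=0$, so each $a^*\vee a^{**}$ is dense and hence each $\op(a^*\vee a^{**})$ is a dense open sublocale. By Proposition~\ref{boolean1} the Booleanization $B_L$ lies inside every dense open sublocale, so $B_L\subseteq M_L$; since $B_L$ is dense this already forces $M_L$ to be dense, and consequently pseudocomplementation in $M_L$ coincides with that of $L$.

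Next I would prove $M_L$ extremally disconnected. Fix $s\in M_L$ and instantiate $a=s$ in the defining intersection to obtain $M_L\subseteq\op(s^*\vee s^{**})$. Writing $\nu$ for the nucleus onto $M_L$, the element $\nu(s^*\vee s^{**})$ belongs to $M_L$, hence to $\op(s^*\vee s^{**})$, while also dominating $s^*\vee s^{**}$; but an element of $\op(c)$ lying above $c$ must equal $1$ by \ref{H2}, so $\nu(s^*\vee s^{**})=1$. As joins in $M_L$ are computed by applying $\nu$ to joins of $L$, this reads exactly as $s^*\vee s^{**}=1$ inside $M_L$, i.e.\ the second De Morgan law holds there.

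For maximality I would take an arbitrary dense extremally disconnected sublocale $T$ and fix $a\in L$. Density gives $0\in T$, hence $a^*=a\to 0\in T$ and then $a^{**}\in T$; applying the De Morgan law of $T$ to the element $a^*$ (and using $a^{***}=a^*$) yields $a^{**}\vee a^*=1$ in $T$, that is $\nu_T(a^*\vee a^{**})=1$. Now \eqref{LocM} gives, for every $t\in T$, that $(a^*\vee a^{**})\to t=\nu_T(a^*\vee a^{**})\to t=1\to t=t$ by \ref{H1}, so $t\in\op(a^*\vee a^{**})$. Thus $T\subseteq\op(a^*\vee a^{**})$ for every $a\in L$, that is $T\subseteq M_L$.

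The genuinely delicate point, and the one I would be most careful about, is not any single calculation but keeping the intrinsic frame structure of a sublocale straight: its joins are $\nu_S$ applied to joins in $L$ and the De Morgan law must be interpreted inside $S$, whereas pseudocomplements agree with those of $L$ only by virtue of density. Once \eqref{LocM} is used to convert the intrinsic equation $\nu_S(a^*\vee a^{**})=1$ into the external statement $S\subseteq\op(a^*\vee a^{**})$, everything collapses cleanly; the only real ideas are the twin instantiations $a=s$ (for disconnectedness) and $s=a^*$ (for maximality).
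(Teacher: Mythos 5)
Your proof is correct, and its skeleton --- density of $M_L$, extremal disconnectedness obtained by showing $\nu_{M_L}(s^*\vee s^{**})=1$, and maximality obtained by showing $T\subseteq \op(a^*\vee a^{**})$ via \eqref{LocM} --- is the same as the paper's. The genuine difference is that you never need the paper's auxiliary lemma that a dense surjection of frames is nearly open (i.e.\ commutes with pseudocomplementation). The paper invokes that lemma twice: in the extremal disconnectedness step, and in the maximality step, where it applies the De Morgan law of the sublocale $S$ to the element $\nu_S(a)$ and rewrites $\nu_S(a^*\vee a^{**})=\nu_S(a)^*\vee^S\nu_S(a)^{**}=1$. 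You instead observe that density puts $0$ in $T$, hence $a^*=a\to 0$ and $a^{**}$ themselves already lie in $T$ (by the defining closure property of sublocales), that pseudocomplements in $T$ restrict from $L$ by density, and that the De Morgan law of $T$ applied to the element $a^*$ (together with $a^{***}=a^*$) gives $\nu_T(a^*\vee a^{**})=a^{**}\vee^T a^*=1$ directly. This is more self-contained and arguably closer to the ``subset'' view of sublocales the paper advocates, whereas the paper's route isolates a reusable general fact (near-openness of dense surjections) at the cost of proving an extra lemma. Your remaining deviations --- deducing density of $M_L$ from Proposition~\ref{boolean1} rather than from the fact that intersections of dense sublocales are dense, and proving $\nu(s^*\vee s^{**})=1$ via the fixed-point description of $\op(c)$ plus inflationarity of $\nu$ rather than the paper's pointwise comparison with the open nucleus $\nu_b$ --- are cosmetic, and all the individual computations you make (e.g.\ $(a^*\vee a^{**})^*=a^{**}\wedge a^*=0$, and the step ``an element of $\op(c)$ above $c$ equals $1$'') are sound.
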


\begin{proof}
It is clear that $M_L$ is a dense sublocale because each $a^*\vee a^{**}$ is dense, and intersections of dense sublocales are dense. Hence we shall denote by $(-)^*$ the pseudocomplement in either $M_L$ or $L$ without danger of confusion.
Let us now prove that it is extremally disconnected. Denote by $\nu_{M_L}$ the frame surjection associated to $M_L$, and let $b\in M_L$. Then, by the previous lemma, $$b^*\vee^{M_L} b^{**}= \nu_{M_L}(b^*\vee b^{**})\geq \nu_b(b^*\vee b^{**})=(b^{*}\vee b^{**})\to (b^{*}\vee b^{**}) = 1$$
where  $\nu_b$ denotes the frame surjection corresponding to $\op(b^*\vee b^{**})$ (note that since $M_L\subseteq \op(b^*\vee b^{**})$, one has $\nu_b\leq \nu_{M_L}$ pointwisely, when they are seen as maps $L\to L$); and $\vee^{M_L}$ denotes the join in $M_L$. Hence $M_L$ is extremally disconnected. Let finally $S$ be a further dense extremally disconnected sublocale and let $\nu_S$ denote the frame surjection associated to $S$. We need to show that $S\subseteq M_L$. Let $a\in L$. Then for any $b\in S$,
$$(a^*\vee a^{**}) \to b = \nu_S(a^*\vee a^{**})\to b=(\nu_S(a)^*\vee^S \nu_S(a)^{**})\to b=1\to b=b.$$
where the first equality follows from \eqref{LocM}, the second equality follows from the lemma above and the fact that $\nu_S$  is a dense surjection; and the third equality holds because $S$ is extremally disconnected. Hence, $S\subseteq \op(a^*\vee a^{**})$ and so $S\subseteq M_L$.
\end{proof}

In view of the previous proposition, the sublocale $M_L$ of $L$ will be referred to as the \emph{DeMorganization} of $L$.

\begin{corollary}
The DeMorganization of a locale $L$ is always a fitted sublocale of $L$.
\end{corollary}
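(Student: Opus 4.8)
The plan is to observe that the statement is an immediate consequence of the explicit formula established in Proposition~\ref{dm}. There, the DeMorganization is presented as
$$M_L=\tbigcap_{a\in L}\op(a^*\vee a^{**}),$$
which already exhibits $M_L$ as an intersection of open sublocales of $L$. Thus the only thing that needs to be verified is the general principle that any sublocale arising as an intersection of open sublocales is automatically fitted; the corollary then follows by applying this principle to the defining family $\{\op(a^*\vee a^{**})\}_{a\in L}$.

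To establish this principle, I would argue directly from the definition of fittedness recalled in Subsection~\ref{subsect.sublocales}. Write $F$ for the intersection of all open sublocales containing $M_L$ (that is, the fitting of $M_L$); I must show that $F=M_L$. The inclusion $M_L\subseteq F$ holds for any sublocale whatsoever, since $M_L$ is contained in each open sublocale that contains it. For the reverse inclusion, note that every member $\op(a^*\vee a^{**})$ of the defining family is itself an open sublocale containing $M_L$, and hence occurs among the open sublocales whose intersection defines $F$; consequently $F\subseteq\tbigcap_{a\in L}\op(a^*\vee a^{**})=M_L$. Combining the two inclusions yields $F=M_L$, so $M_L$ is fitted.

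I do not expect any genuine obstacle here: once the DeMorganization has been identified with the concrete intersection of open sublocales in Proposition~\ref{dm}, fittedness amounts to little more than unwinding the definition. The same bookkeeping argument, incidentally, shows why the ``In particular'' clauses of Proposition~\ref{boolean1} and of the Observation following it hold, so the corollary is entirely parallel to the Booleanization case.
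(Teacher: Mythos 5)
Your proposal is correct and matches the paper's (implicit) argument exactly: the paper states the corollary without proof precisely because, once Proposition~\ref{dm} exhibits $M_L$ as $\tbigcap_{a\in L}\op(a^*\vee a^{**})$, fittedness follows by the unwinding of the definition that you spell out. Your verification that an intersection of open sublocales equals the intersection of \emph{all} open sublocales containing it is the right (and standard) bookkeeping step.
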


Evidently,   the inclusion $$B_L\subseteq M_L$$ always holds
(recall that $B_L$ is the least dense sublocale). It is clear that this inclusion may be strict because in any extremally disconnected locale  (or space) one has $M_L=L$.
 By contrast, for a large class of spaces, the DeMorganization is pointless. The following is  easy to check:

\begin{lemma}
Let $L=\Omega(X)$ where $X$ is a sober $T_1$-space without isolated points. The following are equivalent
\begin{enumerate} 
\item $M_L$ is pointless;
\item Every $x\in X$ is contained the boundary of a regular open set.
\end{enumerate}
\end{lemma}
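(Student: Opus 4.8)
The plan is to translate the statement into a question about prime elements of $\Omega(X)$ and then to decide membership of those primes in the open sublocales cutting out $M_L$.

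First I would reduce the pointlessness of $M_L$ to a containment. The points of any sublocale $S\subseteq L$ are exactly the prime elements of $L$ lying in $S$: if $p$ is prime in $L$ and $p\in S$, then $\bl(p)=\{1,p\}$ is a one-point sublocale inside $S$; conversely a fixed point $s$ of the nucleus defining $S$ is prime in the frame $S$ if and only if it is prime in $L$, because that nucleus preserves finite meets and fixes $s$. Hence $M_L$ is pointless iff it contains no prime of $L$. Since $X$ is sober and $T_1$, the primes of $\Omega(X)$ are precisely the sets $U_x:=X\setminus\{x\}$ for $x\in X$ (the complements of the irreducible closed sets $\overline{\{x\}}=\{x\}$). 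Thus (1) is equivalent to: $U_x\notin M_L$ for every $x\in X$.

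The crux is an elementary computation in $\Omega(X)$: for any open set $c$ one has $U_x\in\op(c)$ if and only if $x\in c$. Indeed $\op(c)=\{u\mid c\to u=u\}$, and the Heyting arrow in $\Omega(X)$ is $c\to u=\operatorname{int}\bigl((X\setminus c)\cup u\bigr)$. Taking $u=U_x$ gives $(X\setminus c)\cup U_x=X\setminus(c\cap\{x\})$; when $x\notin c$ this is all of $X$, so $c\to U_x=X\ne U_x$, whereas when $x\in c$ it equals $X\setminus\{x\}$, which is open by $T_1$, so $c\to U_x=U_x$. This is the one place where the separation hypotheses enter, and it is the main (if brief) obstacle — the remaining steps are bookkeeping.

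Applying this to $M_L=\tbigcap_{a\in L}\op(a^*\vee a^{**})$ yields that $U_x\in M_L$ iff $x\in a^*\vee a^{**}$ for every open $a$. In $\Omega(X)$ one has $a^*=X\setminus\overline{a}$ and $a^{**}=\operatorname{int}(\overline{a})$, so the complement of $a^*\vee a^{**}$ is $\overline{a}\setminus\operatorname{int}(\overline{a})=\partial(a^{**})$, the boundary of the regular open set $a^{**}$; and as $a$ ranges over all opens, $a^{**}$ ranges over all regular open sets, since $R^{**}=R$ for regular open $R$. Therefore $U_x\notin M_L$ if and only if $x$ lies in the boundary of some regular open set, and combining with the previous paragraph gives that $M_L$ is pointless iff every $x\in X$ lies in the boundary of some regular open set, which is exactly (2). (The no-isolated-points assumption is consistent with this description: an isolated point can never lie on the boundary of a regular open set, and correspondingly its $U_x$ always survives in $M_L$.)
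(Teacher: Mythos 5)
Your proof is correct. The paper offers no argument for this lemma (it is flagged as ``easy to check''), so there is no proof to compare against; your verification --- identifying points of a sublocale with the primes of the ambient frame lying in it, showing $U_x\in\op(c)$ iff $x\in c$ (the only place $T_1$ and sobriety are used), and translating the complement of $a^*\vee a^{**}$ into the boundary of the regular open set $\operatorname{int}(\overline{a})$ --- is exactly the routine computation the paper presumably had in mind, and it is carried out correctly, including your accurate side remark that the no-isolated-points hypothesis is not needed for the equivalence itself.
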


\begin{remark}
Any metric space without isolated points satisfies the assumption of this lemma.  In fact, this will follow from Corollary~\ref{last.cor.nw} below. However, there are other classes of spaces which satisfy the condition, such as the class   of locally connected spaces, see \cite{Yang}.
\end{remark}

%
%

\section{The DeMorganization of metric locales}\label{sect5}

It is a well-known fact in point-set topology that a metrizable extremally disconnected space must be discrete (see e.g. \cite[14N(2)]{gilman}). Since metrizability is inherited by sublocales, this motivates to study whether the DeMorganization of a metrizable space (resp. locale) is Boolean. Although a positive result in this direction was announced in the abstract of \cite{johtalk} (for the spatial case), it did not materialize during the talk. In this section, we provide a positive answer in the more general context of metric locales.

We shall make repeated use of the following lemma. 

\begin{lemma}\label{baselemma}
Let $L$ be a regular locale without isolated points, and let  $a,b\in L$ with $a^*=0$ and $b\ne0$.  Then, there are $0\ne c,d\leq b$ with $c\wedge d=0$ and $a\vee c^*=1=a\vee d^*$.
\end{lemma}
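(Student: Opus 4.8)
The plan is to reduce the assertion to a purely ``splitting'' statement and then to read off the two disjoint pieces from the failure of the no-isolated-points hypothesis. First I would note that $a\wedge b\ne 0$: otherwise $b\le a\to 0=a^*=0$, against $b\ne 0$. Since $L$ is regular, $a\wedge b=\tbigvee\set{w\mid w\prec a\wedge b}$, so I may fix a \emph{nonzero} $w$ with $w\prec a\wedge b$. Recall that $w\prec t$ entails $w\le t$ (indeed $w=w\wedge(w^*\vee t)=w\wedge t$), so $w\le a\wedge b\le b$; and since $w^*\vee(a\wedge b)=1$ forces $w^*\vee a=1$, we also get $w\prec a$. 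At this point the entire problem collapses to splitting $w$: if I can produce nonzero $c,d\le w$ with $c\wedge d=0$, then $c,d\le w\le b$, and from $c\le w\prec a$ (antitonicity of $(-)^*$) I get $c^*\vee a\ge w^*\vee a=1$, i.e. $a\vee c^*=1$, and symmetrically $a\vee d^*=1$.

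So the core of the argument is the following splitting claim, which I would isolate: in a regular locale without atoms, every nonzero element $w$ contains two disjoint nonzero elements. To prove it, since $w$ is not an atom I can pick $0\ne x_1<w$, and by regularity a nonzero $c$ with $c\prec x_1$, so that $c\le x_1\le w$ and $c^*\vee x_1=1$. Setting $d:=w\wedge c^*$, I compute
\[
w=w\wedge(c^*\vee x_1)=(w\wedge c^*)\vee(w\wedge x_1)=d\vee x_1 ,
\]
using $x_1\le w$. Hence $d\ne 0$ (otherwise $w=x_1$, contradicting $x_1<w$), while $c\wedge d\le c\wedge c^*=0$, and both lie below $w$, as required.

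The genuinely new input — and the step I expect to be the main obstacle — is the bridge between the stated hypothesis and this claim, namely that a locale \emph{without isolated points} has \emph{no atoms}. I would argue contrapositively: given an atom $u$, I show that $u^*$ is an isolated point. Using \ref{H4}, $u\to x=u\to(u\wedge x)$, and since $u$ is an atom one has $u\wedge x\in\{0,u\}$, whence $u\to x\in\{u^*,1\}$; thus $\op(u)=\{1,u^*\}$. Moreover $u^*$ is meet-prime: if $x\wedge y\le u^*$, i.e. $x\wedge y\wedge u=0$, then (atomicity again) $x\wedge u$ and $y\wedge u$ lie in $\{0,u\}$ and cannot both equal $u$, so $x\le u^*$ or $y\le u^*$. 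Therefore $\bl(u^*)=\{1,u^*\}=\op(u)$ is an open sublocale, making $u^*$ isolated and contradicting the hypothesis. With atoms excluded, the splitting claim applies to the $w$ produced above, and assembling the three steps yields the desired $c,d$. Everything outside this identification of atoms with isolated points is routine Heyting manipulation.
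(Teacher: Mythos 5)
Your proof is correct, and it takes a genuinely different route from the paper's. The paper extracts the two disjoint pieces from the Booleanization of $\newdownarrow b$: using the no-isolated-points hypothesis together with cited results on irreducible locales (\cite[Cor.~3.4\,(c)]{IRRED} and \cite[Prop.~3.2]{IRRED0}), it finds $x$ with $0<x<b$ lying in the Booleanization of $\newdownarrow b$, notes that density of $a$ forces $a\wedge x\ne 0\ne a\wedge x^*\wedge b$, and then applies regularity to pick $0\ne c\prec a\wedge x$ and $0\ne d\prec a\wedge x^*\wedge b$. You reverse the order of operations: you first shrink, getting $0\ne w\prec a\wedge b$ by density and regularity, so that everything afterwards happens below $w$, and then you split $w$ itself by the elementary device $c\prec x_1<w$, $d=w\wedge c^*$, which needs only regularity and the absence of atoms. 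The bridge you prove --- an atom $u$ yields the isolated point $u^*$, since $\op(u)=\{1,u^*\}=\bl(u^*)$ --- is exactly the step the paper passes over when it asserts that $\newdownarrow b\cong\{0,1\}$ would make $b$ ``an isolated point in $L$''; making that implication explicit is a genuine service, and it is the only place where the no-isolated-points hypothesis enters your argument. What your approach buys is self-containment: no Booleanization and no appeal to the irreducibility literature, at the cost of a slightly longer chain of reductions. What the paper's approach buys is brevity and a conceptually clean source for the disjoint pair (an element and its complement inside the Booleanization of $\newdownarrow b$). Both proofs ultimately rest on the same two pillars --- no isolated points gives non-atomicity, and regularity converts non-atomicity into a disjoint splitting compatible with $\prec a$ --- so the difference is one of organization and of which machinery is invoked, not of the underlying idea.
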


\begin{proof}
The Booleanization of $\newdownarrow b$  contains an element $x$ such that  $0<x<b$ (for otherwise $\newdownarrow b$ would be an irreducible  \cite[Cor.~3.4\,(c)]{IRRED} and regular frame, hence by \cite[Prop.~3.2]{IRRED0} isomorphic to $\{0,1\}$, and so $\newdownarrow b$ would be an isolated point in $L$). 
Now, note that $a\wedge x \ne 0 \ne a\wedge x^*\wedge b$ (otherwise, $x\leq a^*=0$, which is a contradiction, or $x^*\wedge b\leq a^*=0$, but $  x^* \wedge b$ is the pseudocomplement of $x$ in $\newdownarrow b$, and so $x=b$ which is also a contradiction).
By regularity, $0\ne a\wedge x= \tbigvee\{z\in L \mid z\prec a\wedge x\}$ and $0\ne a\wedge x^*\wedge b= \tbigvee\{z\in L \mid z\prec a\wedge x^*\wedge b\}$ so there are $c,d\ne 0$ with $c\prec a\wedge x$ and $d\prec a\wedge x^*\wedge b$, which clearly satisfy the required conditions.
\end{proof}

The  construction in the following proposition is the key step to show that in the class of metrizable locales without isolated points the DeMorganization coincides with the Booleanization.
The idea of the proof is based on \cite[Thm.~3]{Yang}, although several modifications are needed in order to generalize it to the point-free setting via diameters. 

Given a dense element \(a\) in a metric locale \(L\), the goal of the proof is to construct  elements \(g, h \in L\) satisfying \(g^{**}\vee h^{**} \le a\) and such that the union $g^{**}\vee h^{**}$ is still dense (this is essentially the content of Proposition~\ref{mainmetriclocp} below). To achieve this, in Proposition~\ref{prop.construction} we inductively build a family \(\{B_n\}_{n\in\N}\) of subsets of \(L\) that are pairwise disjoint in a strong sense, with the diameters of elements in \(B_n\) decreasing as \(n\) grows. Within each \(b\in B_n\), we extract two disjoint elements \(g_b\) and \(h_b\), each well-inside \(a\). Thanks to the strong separation properties of \(B_n\), the joins  
\[
  g :=\; \tbigvee_{n\in \N}\tbigvee_{b\in B_n} g_b
  \quad\text{and}\quad
  h :=\; \tbigvee_{n\in \N}\tbigvee_{b\in B_n} h_b
\]
still satisfy \(g^{**} \le a\) and \(h^{**} \le a\) and their union remains dense.

 Before proceeding, we recall here that a family $\{a_i\}_{i\in I}$ in a locale is said to be \emph{discrete} if there is a cover $C$ of $L$ such that any $c\in C$ meets $a_i$ for at most one $i\in I$ (cf. \cite[Remark~4.2]{hedgehog}). If $\{a_i\}_{i\in I}$ is discrete and $b\in L$, then $b\vee \tbigwedge_i a_{i}^*= \tbigwedge_{i}(b\vee a_{i}^*)$ (see \cite[Remarks~6.1\,(2)--(3)]{hedgehog}).

\begin{proposition}\label{prop.construction}
Let $(L,d)$ be a metric locale without isolated points and $a <1 $ with $a^*=0$. Then there is a countable family $\{B_n\}_{n\in\N}$ of subsets of $L$, and for any $n\in \N$ and $b\in B_n$ there are two elements $0\ne g_{b},h_b\leq b$ with $g_b\wedge h_b=0$ satisfying the following properties for each $n\in\N$:
\begin{enumerate}[label=\textup{(\arabic*$_n$)},leftmargin=2.0\parindent]
\item \label{1n} For any $b\in B_n$,  the relation $d(b)\leq \frac{2}{n}$ holds,
\item \label{2n} For any $x\in L$ such that $d(x)\leq \frac{1}{n}$, one has $0\ne x\wedge b$ for at most one $b\in B_n$,
\item \label{3n} For any $m<n$, any $b\in  B_n$ and $b'\in B_m$, one has $g_{b'}\wedge b=0= h_{b'}\wedge b$,
\item \label{4n} $B_n$ is maximal among families satisfying \textup{(1$_n$)--(3$_n$)},
\item \label{5n} For any $b\in B_n$, the relation $a\vee g_{b}^*=1=a\vee h_{b}^*$ holds. 
\end{enumerate}
\end{proposition}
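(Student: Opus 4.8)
The plan is to construct the families $B_n$ together with the attached pairs $g_b, h_b$ by a single recursion on $n\in\mathbb{N}$, so that the resulting family $\{B_n\}_{n\ge 0}$ is automatically countable. All the substance is carried by two external inputs: Zorn's lemma secures the maximality clause $(4_n)$, and Lemma~\ref{baselemma} produces the pairs $g_b,h_b$ and hence clause $(5_n)$.

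For the recursion, suppose $B_0,\dots,B_{n-1}$ and the elements $g_{b'},h_{b'}$ for all $b'\in B_0\cup\cdots\cup B_{n-1}$ have already been defined; at the bottom stage there is nothing to assume, and I read the bounds $\tfrac{2}{n},\tfrac{1}{n}$ as $+\infty$, so that $(1_0)$ is vacuous and $(3_0)$ is empty. Let $\mathcal{F}_n$ be the poset, ordered by inclusion, of all families $B$ of \emph{nonzero} elements of $L$ satisfying $(1_n)$--$(3_n)$, where $(3_n)$ now refers only to the \emph{already-fixed} elements $g_{b'},h_{b'}$ with $b'\in B_m$ and $m<n$. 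I take $B_n$ to be a maximal member of $\mathcal{F}_n$; this is exactly $(4_n)$, with maximality understood among families of nonzero elements, since each chosen $b$ must admit nonzero $g_b,h_b\le b$. That such a maximal member exists is a routine application of Zorn's lemma: $\mathcal{F}_n$ contains the empty family and is closed under unions of chains, the only slightly delicate point being $(2_n)$ --- but if some $x$ with $d(x)\le\tfrac1n$ met two distinct members of a chain-union, those two members would already lie in a single family of the chain, contradicting its discreteness.

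Finally, to each $b\in B_n$ I attach a pair $g_b,h_b$ as follows. Since $(L,d)$ is a metric locale it is regular, and by hypothesis $L$ has no isolated points, so Lemma~\ref{baselemma} applies to the pair $a,b$ (recall $a^*=0$ and $b\ne 0$) and yields $0\ne g_b,h_b\le b$ with $g_b\wedge h_b=0$ and $a\vee g_b^*=1=a\vee h_b^*$. These are precisely the required elements, and the last two identities are exactly $(5_n)$; conditions $(1_n)$--$(3_n)$ hold because $b\in B_n\in\mathcal{F}_n$. This closes the recursion. The one point that genuinely needs care is its coherence: clause $(3_n)$ couples stage $n$ to the pairs produced at \emph{all} earlier stages, so these must be frozen before stage $n$ is begun --- which the stagewise recursion guarantees. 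Everything else is bookkeeping, and the diameters $d$ enter the argument only through the numerical bounds appearing in $(1_n)$ and $(2_n)$.
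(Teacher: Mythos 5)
Your proof is correct and has the same skeleton as the paper's: a stagewise recursion in which Zorn's lemma supplies $(4_n)$ and Lemma~\ref{baselemma} supplies the pairs $g_b,h_b$ and hence $(5_n)$. The one genuine divergence is in how Zorn's lemma is seeded. You use the empty family; the paper instead spends the bulk of its inductive step constructing a \emph{nonzero} element $b'$ with $d(b')\leq\frac{2}{n}$ such that $\{b'\}$ satisfies $(1_n)$--$(3_n)$: it first proves $\tbigwedge_{0\leq m<n}\tbigwedge_{b\in B_m}(g_b^*\wedge h_b^*)\neq 0$ --- this is exactly where $(5_m)$, the discreteness of the earlier families, and the hypothesis $a<1$ enter (your argument never uses $a<1$ at all) --- and then shrinks that element using regularity and \ref{D4}. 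What that extra work buys is the guarantee that each maximal $B_n$ is nonempty. Since the proposition as stated never asserts $B_n\neq\emptyset$, and since the downstream Proposition~\ref{mainmetriclocp} invokes $(4_n)$ only by exhibiting a strictly larger family satisfying $(1_n)$--$(3_n)$ (which contradicts maximality whether or not $B_n$ is empty) and covers the degenerate case $c_0\wedge g=0$ through its item (2), your leaner argument does establish the stated result; what it gives up is the (true but unstated) nonemptiness of the $B_n$ and any role for the hypothesis $a<1$. A point in your favour: your explicit restriction to families of nonzero elements is in fact forced, since if $0$ were admitted as a member then no family could be maximal without containing $0$, and then $g_0,h_0$ could not exist; the paper leaves this reading implicit.
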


\begin{proof}
We construct the required families by strong induction.  Let $n\in\N$ and suppose  we have constructed $B_m$,  $g_b$ and $h_b$ for every $b\in B_m$ and for every $ m < n$. We now proceed to construct $B_n$.  For each $m< n$, by (2$_m$) and property \ref{D4} it follows readily  that  each $B_m$ is a discrete family, and since $g_b,h_b\leq b$, so are the families $\{g_b\}_{b\in B_m}$ and  $\{h_b\}_{b\in B_m}$. By the comment preceding the statement, it follows that $a\vee \tbigwedge_{b\in B_m} g_{b}^{*}= \tbigwedge_{b\in B_m} (a\vee g_{b}^*)=1$, and similarly $a\vee \tbigwedge_{b\in B_m} h_{b}^{*}=1$. Now, we claim that $$b:= \tbigwedge_{m<n} \tbigwedge_{b\in B_m} g_{b}^*\wedge h_{b}^* \ne 0,$$
for otherwise, $a=\tbigwedge_{ m<n} \tbigwedge_{b\in B_m}(a\vee g_{b}^*)\wedge (a\vee h_{b}^*) =1$, a contradiction. 
Since a metric locale is regular, there is an $x\ne 0$ such that $x^*\vee b=1$. Moreover, since $\{ u\mid d(u)\leq \frac{2}{n} \}$ covers $L$ by \ref{D4},  there is a $u\in L$ with $d(u)\leq \frac{2}{n}$ such that $b':=u\wedge x\ne 0$. 	We claim that the one-element family $\{b'\}$ satisfies (1$_n$), (2$_n$) and (3$_n$). Indeed, (1$_n$) and  (2$_n$) are trivial, and (3$_n$) holds because $b'^*\vee b=1$. Now, we have just seen that the family
$$\mathcal{B}_n=\{ B\subseteq L \mid  B \text{ satisfies (1$_n$)--(3$_n$)}\}$$ 
is nonempty, and hence an easy application of Zorn's Lemma yields a maximal $B_n$. Hence $B_n$ satisfies (4$_n$). The existence of the elements $g_b$ and $h_b$ satisfying (5$_n$) for $b\in B_n$ follows at once from Lemma~\ref{baselemma}.
\end{proof}

We shall need the following easy property about metric locales:

\begin{lemma}\label{cor.nee}
Let $(L,d)$ a metric locale and $b\ne 0$. Then there exists $0\ne c\leq b$ and $n\in\N$ such that for all $x\in L$ with $d(x)<\frac{1}{n}$, either $x\wedge b^* =0 $ or $x\wedge c= 0$.
\end{lemma}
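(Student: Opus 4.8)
The plan is to read off the element $c$ and the integer $n$ directly from admissibility of the diameter. Recall that, since $(L,d)$ is a metric locale, the diameter is admissible, so for our nonzero element $b$ we have the representation
$$b=\tbigvee\set{c\in L\mid c\lhd_\varepsilon b,\ \varepsilon>0},$$
where $c\lhd_\varepsilon b$ means that every $x\in L$ with $d(x)<\varepsilon$ and $x\wedge c\ne 0$ satisfies $x\leq b$. First I would exploit the hypothesis $b\ne 0$: the join on the right is then nonzero, so not all of its terms can vanish, and hence there exist $0\ne c\in L$ and some $\varepsilon>0$ with $c\lhd_\varepsilon b$ (note $c\leq b$ holds automatically, by the remark following the definition of $\lhd_\varepsilon$ in Section~\ref{sect2}).

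With such $c$ and $\varepsilon$ fixed, I would choose $n\in\N$ large enough that $\tfrac{1}{n}\leq\varepsilon$, and then verify the conclusion by the dichotomy built into $\lhd_\varepsilon$. Indeed, for any $x\in L$ with $d(x)<\tfrac{1}{n}\leq\varepsilon$, either $x\wedge c=0$, in which case we are done, or $x\wedge c\ne 0$, in which case the relation $c\lhd_\varepsilon b$ forces $x\leq b$; but then $x\wedge b^*\leq b\wedge b^*=0$. In either case $x\wedge b^*=0$ or $x\wedge c=0$, which is precisely the required statement.

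I expect no real obstacle here, since the lemma is essentially a reformulation of admissibility once one observes that $x\leq b$ immediately yields $x\wedge b^*=0$. The only two points meriting a moment's care are the extraction of a \emph{nonzero} term from the join representation of $b$ (which is exactly where $b\ne 0$ enters) and the conversion of the covering threshold $\varepsilon$ of the relation $\lhd_\varepsilon$ into the strict bound $d(x)<\tfrac{1}{n}$ demanded by the statement; both are handled by the choice $\tfrac{1}{n}\leq\varepsilon$.
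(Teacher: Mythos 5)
Your proof is correct and follows essentially the same route as the paper: both extract, via admissibility and $b\ne 0$, a nonzero $c$ with $c\lhd_\varepsilon b$, and both conclude from $x\wedge c\ne 0$ that $x\leq b$, hence $x\wedge b^*\leq b\wedge b^*=0$. The only difference is presentational --- the paper phrases the argument as a proof by contradiction, whereas you argue directly (and handle the conversion of $\varepsilon$ into $\tfrac{1}{n}$ explicitly, which the paper leaves implicit); the mathematical content is identical.
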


\begin{proof} Since $b\ne 0$, by admissibility there is an $n\in \N$ and a $c\ne 0$ such that $c\lhd_{1/n} b$.  The required property follows immediately from the definition of the relation $\lhd$.\end{proof}

\begin{proposition}\label{mainmetriclocp}
Let $(L,d)$ be a metric locale without isolated points and $a <1 $ with $a^*=0$.  Let $\{B_n\}_{n\in\N}$, $g_b$ and $h_b$ (for $b\in B_n$ and $n\in\N$) be as in Proposition~\ref{prop.construction}, and define
$$g:=\tbigvee_{n\in\N}\tbigvee_{b\in B_n} g_b\qquad \text{ and } \qquad h:= \tbigvee_{n\in\N}\tbigvee_{b\in B_n}h_b.$$
Then, the following properties hold:
\begin{enumerate}
\item \label{mainmetriclocp1} $g\wedge h=0$,
\item \label{mainmetriclocp2} $g^*\wedge h^*=0$,
\item \label{mainmetriclocp3} $h^*\leq a \vee g^*$, and symmetrically $g^*\leq a \vee h^*$.
\end{enumerate}
\end{proposition}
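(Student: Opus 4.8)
The plan is to establish the three identities in turn, leaning on the combinatorial separation packaged in Proposition~\ref{prop.construction} for \ref{mainmetriclocp1}, and on the metric control of Lemma~\ref{cor.nee} for the density assertion \ref{mainmetriclocp2}.

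For \ref{mainmetriclocp1}, I would expand $g\wedge h$ by the frame distributive law as $\tbigvee_{n,m}\tbigvee_{b\in B_n,\,b'\in B_m}(g_b\wedge h_{b'})$ and check that every summand vanishes. There are three cases. If $b=b'$ lie in the same level, then $g_b\wedge h_b=0$ by construction. If $b\ne b'$ lie in the same $B_n$, then property \ref{2n} together with \ref{D4} forces $b\wedge b'=0$ (a witness of $b\wedge b'\ne 0$ of diameter $<\tfrac1n$ would meet two members of $B_n$), so $g_b\wedge h_{b'}\le b\wedge b'=0$. Finally, if the two indices sit in different levels, say $b\in B_n$ and $b'\in B_m$ with $m<n$, then \ref{3n} gives $g_{b'}\wedge b=0=h_{b'}\wedge b$; since $g_b,h_b\le b$, both $g_b\wedge h_{b'}$ and $g_{b'}\wedge h_b$ vanish. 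This case analysis is routine.

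For \ref{mainmetriclocp2}, observe that $g^*\wedge h^*=(g\vee h)^*$, so I must show that $g\vee h$ is dense. I would argue by contradiction: assume $w:=g^*\wedge h^*\ne 0$ and produce a nonzero element that may be adjoined to some $B_n$ without destroying \ref{1n}--\ref{3n}, contradicting the maximality \ref{4n}. Writing $w_n:=\tbigwedge_{m<n}\tbigwedge_{b\in B_m}(g_b^*\wedge h_b^*)$, one has $w\le w_n$, so any nonzero $c'\le w$ automatically satisfies the disjointness required by \ref{3n}; shrinking $c'$ via \ref{D4} arranges $d(c')\le\tfrac2n$, giving \ref{1n}. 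The only delicate point is \ref{2n}: I must rule out a single $x$ with $d(x)\le\tfrac1n$ meeting both $c'$ and some $b\in B_n$. This is exactly what Lemma~\ref{cor.nee} is designed for, since applied to $w$ it yields $0\ne c\le w$ and an $N$ such that every small $x$ meeting $c$ is forced into $w^{**}$; taking $c'\le c$ and $n\ge N$ then lets me convert this into the desired $\tfrac1n$-separation of $c'$ from the clumps of $B_n$. \textbf{I expect this separation step to be the main obstacle}, as it is where the purely order-theoretic bookkeeping must be traded for genuine metric information, and it requires matching the scale $N$ produced by Lemma~\ref{cor.nee} with the level $n$ at which maximality is invoked.

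For \ref{mainmetriclocp3}, I would first extract from \ref{5n} the per-level consequences. Within a fixed $B_n$ the family $\{g_b\}_{b\in B_n}$ is discrete (by \ref{2n} and \ref{D4}), so the distributivity recalled just before Proposition~\ref{prop.construction} gives $a\vee G_n^*=\tbigwedge_{b\in B_n}(a\vee g_b^*)=1$, where $G_n:=\tbigvee_{b\in B_n}g_b$; in particular each $g_b\le a$, whence $g\le a$ and symmetrically $h\le a$. The inequality $h^*\le a\vee g^*$ would then follow by covering $h^*$ through admissibility: for $z\lhd_\varepsilon h^*$ the part of $z$ meeting $g$ already lies in $a$ (because $g\le a$), while the metric separation of $z$ from $h$ keeps the remainder inside $g^*$; the reverse inequality $g^*\le a\vee h^*$ is identical with the roles of $g$ and $h$ exchanged. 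The subtle point, parallel to the obstacle above, is that the per-level identities $a\vee G_n^*=1$ must be amalgamated into a single statement about $g^*=\tbigwedge_n G_n^*$; since the law $a\vee\tbigwedge_n G_n^*=\tbigwedge_n(a\vee G_n^*)$ fails for families that are only pairwise disjoint rather than discrete, this amalgamation genuinely uses the diameter $d$ (via admissibility and Lemma~\ref{cor.nee}) and cannot be carried out by frame distributivity alone.
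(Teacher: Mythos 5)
Part (\ref{mainmetriclocp1}) of your proposal is correct and is essentially the paper's own case analysis. The trouble is in parts (\ref{mainmetriclocp2}) and (\ref{mainmetriclocp3}), and in both cases the gap sits exactly where you yourself flagged ``the main obstacle'': the conversion step you defer is not routine bookkeeping but the actual content of the proof, and the naive version of it fails. For (\ref{mainmetriclocp2}): Lemma~\ref{cor.nee} applied to $w:=g^*\wedge h^*$ gives $0\ne c\leq w$ and a scale $1/n$ such that any small $y$ meeting $c$ satisfies $y\leq w^{**}=w$, i.e.\ $y\wedge g=0=y\wedge h$. But condition (2$_k$) for $B_k\cup\{c'\}$ concerns small $x$ meeting $c'$ and some $b\in B_k$, and meeting $b$ does \emph{not} imply meeting $g$ or $h$, since $g_b\vee h_b$ is in general much smaller than $b$; so no choice of ``$c'\leq c$ and $k\geq n$'' turns one separation into the other. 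The paper's resolution is an enlargement-of-the-witness trick: it adjoins the new element at level $8n$, choosing $d\leq c$ with $d(d)<\frac{1}{8n}$, and, given a violating $x$ with $d(x)\leq\frac{1}{8n}$ meeting both $d$ and some $b\in B_{8n}$, it passes to the join $y:=x\vee b\vee d$. Because $x$ meets both $b$ and $d$, two applications of \ref{D3} give $d(y)\leq\frac{5}{8n}<\frac1n$, and now $y\geq d$ meets $c$ while $y\geq b\geq g_b\ne 0$ meets $g$, contradicting the property of $c$. The factor $8$ is precisely the room needed for the triangle inequalities; this construction fits inside your outline, but without it the maximality (4$_{8n}$) cannot be contradicted.

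For (\ref{mainmetriclocp3}), your opening observation is correct and useful: (5$_n$) says $g_b\prec a$, hence $g\leq a$ and $h\leq a$. But the rest does not parse in a frame: an element $z$ admits no decomposition into ``the part meeting $g$'' and ``the remainder'' (that would require $z\leq (z\wedge g)\vee(z\wedge g^*)$, which fails), and the metric separation of $z$ from $h$ says nothing about $g^*$ by itself. The argument that actually works (the paper's) runs: if $c$ satisfies $c\lhd_{1/n}h^*$ but $c\not\leq a\vee g^*$, then (i) $b_0:=c\wedge\tbigwedge_{0\leq m\leq 2n}\tbigwedge_{b\in B_m}g_b^*\ne 0$, since otherwise distributing $a\vee(-)$ over these finitely many levels --- using discreteness of each $\{g_b\}_{b\in B_m}$ and (5$_m$) --- yields $c\leq a$, a contradiction (note the amalgamation you worried about is only ever needed level by level and over the levels $m\leq 2n$, so discreteness suffices and no passage to $g^*=\tbigwedge_k G_k^*$ is required); (ii) $b_0$ must meet $g_b$ for some $b\in B_k$ with $k>2n$, for otherwise $b_0\wedge g=0$ and then $b_0\leq g^*\wedge h^*=0$ by part (\ref{mainmetriclocp2}), contradicting $b_0\ne0$; (iii) for such a $b$, one has $d(g_b\vee h_b)\leq d(b)\leq\frac2k<\frac1n$, yet $g_b\vee h_b$ meets $c$ and contains $h_b\ne 0$, contradicting $c\lhd_{1/n}h^*$. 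The structural fact your sketch never exploits is that $g_b$ and $h_b$ are \emph{paired inside a single $b$ of small diameter}, so that meeting a high-level $g_b$ forces metric proximity to $h$; that pairing, combined with part (\ref{mainmetriclocp2}), is what replaces the impossible splitting of $z$.
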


\begin{proof}
(1) First, for $n\in\N $ and  $b\in B_n$, we have $g_b\wedge h_b=0$ by construction. Moreover, it follows from  (2$_n$) and \ref{D4} that  $B_n$ is a discrete family and so it is pairwise disjoint \cite[Remark~4.2]{hedgehog}. Since $g_b\leq b$ and $h_{b'}\leq b'$ one therefore has $g_b\wedge h_{b'}=0$ for $b\ne b'$ in $B_n$. If $n<m$, $b\in B_n$ and $b'\in B_m$, one has $g_b\wedge h_{b'}\leq g_b\wedge b'=0$ by property (3$_n$) and similarly $g_{b'}\wedge h_{b}=0$.\\[2mm]
(2) By way of contradiction, suppose $g^*\wedge h^*\ne 0$. By Lemma~\ref{cor.nee}, there exists a $0\ne c\leq g^*\wedge h^*$ and an $n\in\N$ such that for all $y\in L$ such that $d(y)<\frac{1}{n}$, either $y\wedge (g^*\wedge h^*)^*=0$ or $y\wedge c=0$. Observe that $y\wedge (g^*\wedge h^*)^*=0$ iff $y\leq (g^*\wedge h^*)^{**}=g^*\wedge h^*$ because double pseudocomplement commutes with finite meets. Hence, 
\begin{equation}\label{eqqq}
d(y)<\frac{1}{n} \text{ implies either } y\wedge c=0\text{ or } (y\wedge g=0 \text{ and } y\wedge h=0).
\end{equation}

Now, by \ref{D4} select a $0<d\leq c$ with $d(d)<\frac{1}{8n}$. We first note that $d\not\in B_{8n}$, for otherwise, $g_d=g_d\wedge d \leq g\wedge c=0$, a contradiction.  Moreover, the family $B_{8n}\cup \{ d\}$ also satisfies conditions (1$_{8n}$)--(3$_{8n}$). In fact, (1$_{8n}$) is trivial, and (3$_{8n}$) follows easily as for any $m<8n$ and $b\in B_m$, one has $g_{b}\wedge d\leq g\wedge c=0$ and similarly $h_{b}\wedge d=0$. The only part remaining is to show (2$_{8n}$). Hence suppose there is a $x\in L$ with $d(x)\leq \frac{1}{8n}$ and  a $b\in B_{8n}$ with $x\wedge b\ne 0 \ne x\wedge d$. By \ref{D3} one has
$$d(x\vee b)\leq d(x)+d(b) \leq \frac{1}{8n}+\frac{2}{8n} =\frac{3}{8n}$$
and 
$$d(x\vee d)\leq d(x)+d(d) < \frac{1}{8n}+\frac{1}{8n} = \frac{2}{8n}.$$
Moreover, since $(x\vee b)\wedge (x\vee d)\geq x \ne 0$, one has
$$d (x\vee b\vee d)\leq  d(x\vee b)+d(x\vee d) < \frac{5}{8n}<\frac{1}{n}.$$
But then, if we set $y:=x\vee b \vee d$, one has $d(y)<1/n$ and $y\wedge c\geq d\wedge c\ne 0$ and $y\wedge g\geq b\wedge g_{b}=g_b\ne 0$. This is in contradiction with \eqref{eqqq}, so (2$_{8n}$) is satisfied. But then the family $B_{8n}\cup\{d\}$ satisfies (1$_{8n}$)--(3$_{8n}$) which contradicts the maximality given by (4$_{8n}$).\\[2mm]
(3) We only prove the first inequality. By way of contradiction, suppose that $h^*\not\leq a \vee g^*$. By admissibility, there is an $n\in \N$ and a $c\in L$ with $c\wedge h=0$, $c\not\leq a\vee g^*$ and the property that 
\begin{equation}\label{eq1}
d(x)<\frac{1}{n} \text{ implies either } x\wedge c=0\text{ or }x\wedge h=0.
\end{equation}
Now, consider the element $b_0:= c\wedge \tbigwedge_{m \leq 2n} \tbigwedge_{b\in B_m} g_{b}^* $.
We claim $b_0\ne 0$. Otherwise, $a= (c\vee a)\wedge \tbigwedge_{ m \leq 2n}\tbigwedge_{b\in B_m}(a\vee g_{b}^*)=c\vee a$ by property (5$_{m}$) and by discreteness of the family $B_m$. Therefore, one has $c\leq a\leq a\vee g^*$, which is a contradiction.  Hence $b_0\ne 0$. By Lemma~\ref{cor.nee}, there is a $0\ne c_0\leq b_0$ and $m\in\N$ such that for all $y\in L$ such that $d(y)<\frac{1}{m}$, either $y\wedge b_{0}^*=0$ or $x\wedge c_0=0$. 
We claim that there is a $k> 2n$ and $b\in B_k$ such that $g_b\wedge c_0\ne 0$.  Indeed, if for all $k> 2n$ and $b\in B_k$ one has $g_b\wedge c_0=0$, since $c_0\leq b_0\leq\tbigwedge_{ m \leq 2n} \tbigwedge_{b\in B_m}g_{b}^*$, we also have $g_b\wedge c_0=0$ for all $k\leq 2n$ and  all $b\in B_k$.  It follows that $c_0\wedge g=0$. Then $c_{0}\leq g^*$, but $c_0\leq b_{0}\leq c\leq h^*$, from which follows  that $c_0\leq g^*\wedge h^*=0$ by item (2), a contradiction.

Therefore, there is a $k> 2n$ and $b\in B_k$ with $g_b\wedge c_0\ne 0$. Finally observe that
$$d(g_b\vee h_b)\leq d(b)\leq \frac{2}{k}<\frac{1}{n}.$$
This contradicts \eqref{eq1}, as $(g_b\vee h_b)\wedge c\geq g_b\wedge c_0\ne 0$ and $(g_b\vee h_b)\wedge h\geq h_b\ne 0$.
\end{proof}

\begin{corollary}\label{last.cor.nw}
Let $(L,d)$ be a metric locale without isolated points and let $a\in L$ with $a^*=0$. Then there is a $g\in L$ with $g^*\vee g^{**}\leq a$.
\end{corollary}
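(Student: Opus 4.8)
The plan is to read the conclusion off directly from Proposition~\ref{mainmetriclocp}, since that proposition has already done all the construction work. First I would dispose of the trivial case $a=1$: here any $g$ (say $g=0$, for which $g^*\vee g^{**}=1\vee 0=1$) satisfies $g^*\vee g^{**}\le 1$, so there is nothing to prove. Hence I may assume $a<1$, and then $a$ is a dense element strictly below $1$, so Proposition~\ref{mainmetriclocp} applies and produces elements $g,h\in L$ with $g\wedge h=0$, with $g^*\wedge h^*=0$, and with $h^*\le a\vee g^*$ together with the symmetric inequality $g^*\le a\vee h^*$. My claim is that this very $g$ witnesses the corollary, i.e. that $g^*\vee g^{**}\le a$.

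Next I would bound the two joinands $g^*$ and $g^{**}$ separately, each time exploiting frame distributivity in the simple form: if $x\le a\vee y$ and $x\wedge y=0$, then $x=x\wedge(a\vee y)=(x\wedge a)\vee(x\wedge y)=x\wedge a\le a$. Applied with $x=g^*$ and $y=h^*$, the inequality $g^*\le a\vee h^*$ together with $g^*\wedge h^*=0$ gives $g^*\le a$. For the second joinand, I would first observe that $g\wedge h=0$ forces $h\le g^*$ and hence $g^{**}\le h^*$; it therefore suffices to prove $h^*\le a$. But this is exactly the symmetric instance of the same distributivity observation, now with $x=h^*$ and $y=g^*$, using $h^*\le a\vee g^*$ and $g^*\wedge h^*=0$. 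Combining $g^*\le a$ with $g^{**}\le h^*\le a$ yields $g^*\vee g^{**}\le a$, as required.

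I do not expect any genuine obstacle at this stage: the entire analytic difficulty is absorbed into the inductive construction behind Propositions~\ref{prop.construction} and~\ref{mainmetriclocp}, and what is left here is purely the elementary Heyting manipulation above. The only point worth a moment's care is to notice that one needs just the one-line consequence $g^{**}\le h^*$ of $g\wedge h=0$, rather than the stronger equality $g^{**}=h^*$, so that the computation remains perfectly symmetric in the roles of $g$ and $h$ and no extra hypothesis is smuggled in.
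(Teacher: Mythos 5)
Your proof is correct and follows essentially the same route as the paper's: dispose of $a=1$, invoke Proposition~\ref{mainmetriclocp}, deduce $g^*\leq a$ and $h^*\leq a$ from parts (\ref{mainmetriclocp2}) and (\ref{mainmetriclocp3}), and use $g\wedge h=0$ to get $g^{**}\leq h^*$. The only difference is that you spell out the distributivity step ($x\leq a\vee y$ and $x\wedge y=0$ imply $x\leq a$) that the paper leaves implicit, which is a welcome clarification rather than a deviation.
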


\begin{proof}
If $a=1$, there is nothing to prove, so assume $a<1$ and let $g,h\in L$ as in Proposition~\ref{mainmetriclocp}. Combining the first inequality of Proposition~\ref{mainmetriclocp}\,(\ref{mainmetriclocp3}) and Proposition~\ref{mainmetriclocp}\,(\ref{mainmetriclocp2}), it follows that $h^*\leq a$. Similarly, $g^*\leq a$. Finally, by Proposition~\ref{mainmetriclocp}\,(\ref{mainmetriclocp1}) it follows that $g^{**}\leq h^*$, which yields the required conclusion.
\end{proof}

\begin{corollary}\label{main.corollary}
In any metric locale without isolated points, the Booleanization coincides with the DeMorganization.
\end{corollary}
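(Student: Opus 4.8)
The plan is to establish the two inclusions $B_L\subseteq M_L$ and $M_L\subseteq B_L$ separately. The first is immediate and holds for every locale, with no metric hypotheses: since $M_L$ is a dense sublocale (Proposition~\ref{dm}) and $B_L$ is the \emph{least} dense sublocale by Isbell's density theorem, we get $B_L\subseteq M_L$ at once. All the content therefore lies in the reverse inclusion, which is where metrizability and the absence of isolated points are used.

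For $M_L\subseteq B_L$ I would compare the two presentations of these sublocales as intersections of open sublocales. By Proposition~\ref{boolean1}, $B_L=\tbigcap\set{\op(a)\mid a\in L,\,a^*=0}$ is the intersection of all dense open sublocales, so it suffices to show that $M_L\subseteq\op(a)$ for every dense $a\in L$. Fixing such an $a$, Corollary~\ref{last.cor.nw} — precisely the bridge supplied by the metric hypotheses — furnishes a $g\in L$ with $g^*\vee g^{**}\leq a$.

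It then remains to invoke monotonicity of the open-sublocale operator: from $g^*\vee g^{**}\leq a$ together with the identity $\op(x)\cap\op(y)=\op(x\wedge y)$ one gets $\op(g^*\vee g^{**})\subseteq\op(a)$. On the other hand, the defining formula $M_L=\tbigcap_{x\in L}\op(x^*\vee x^{**})$ of Proposition~\ref{dm} gives, upon taking $x=g$, the inclusion $M_L\subseteq\op(g^*\vee g^{**})$. Chaining these yields $M_L\subseteq\op(a)$, and since $a$ was an arbitrary dense element, intersecting over all such $a$ produces $M_L\subseteq\tbigcap\set{\op(a)\mid a^*=0}=B_L$, as required.

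The genuine difficulty is not in this final assembly, which is a short diagram of inclusions resting on the matching intersection descriptions of $B_L$ and $M_L$, but in Corollary~\ref{last.cor.nw} itself: the entire inductive construction of Proposition~\ref{prop.construction} together with the delicate diameter estimates of Proposition~\ref{mainmetriclocp} are what is needed to produce, for a given dense $a<1$, a single element $g$ whose ``double boundary'' $g^*\vee g^{**}$ sits below $a$. Once that element is in hand, the equality $B_L=M_L$ falls out purely formally.
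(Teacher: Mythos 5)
Your proposal is correct and follows essentially the same route as the paper: the inclusion $B_L\subseteq M_L$ is immediate since $B_L$ is the least dense sublocale, and the reverse inclusion is obtained by reducing, via Proposition~\ref{boolean1}, to showing $M_L\subseteq\op(a)$ for each dense $a$, then applying Corollary~\ref{last.cor.nw} and the chain $M_L\subseteq\op(g^*\vee g^{**})\subseteq\op(a)$. Your version is merely more explicit about the monotonicity of $\op(-)$ and about where the metric hypotheses enter, but it is the same argument.
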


\begin{proof}
We only need to check the inclusion $M_L\subseteq B_L$. By Proposition~\ref{boolean1} this amounts to showing that for any dense $a\in L$, the relation $M_L\subseteq \op(a)$ holds. But by Corollary~\ref{last.cor.nw} there is a $g\in L$ with $g^*\vee g^{**}\leq a$.  It follows that $M_L\subseteq \op(g^*\vee g^{**})\subseteq \op(a)$.
\end{proof}

\begin{corollary}
An extremally disconnected metric locale  without isolated points is Boolean.
\end{corollary}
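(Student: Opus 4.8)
The plan is to combine Corollary~\ref{main.corollary} with the elementary observation that extremal disconnectedness forces the DeMorganization to be the entire locale. First I would verify that if $L$ is extremally disconnected, then $L$ itself qualifies as a dense extremally disconnected sublocale of $L$: the whole locale is trivially a sublocale of itself, it is dense since $\tbigwedge L = 0$, and it is extremally disconnected by hypothesis. Since Proposition~\ref{dm} identifies $M_L$ as the \emph{largest} dense extremally disconnected sublocale, the maximality clause immediately gives $L \subseteq M_L$, whence $M_L = L$.

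Next, because $L$ is by assumption a metric locale without isolated points, Corollary~\ref{main.corollary} applies and yields $B_L = M_L$. Chaining this with the previous step produces $B_L = M_L = L$. Finally, the Booleanization $B_L$ is a Boolean locale by its very characterization (the unique Boolean dense sublocale recalled at the start of Section~\ref{sect3}), so from $L = B_L$ we conclude that $L$ is Boolean, as required.

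I do not anticipate any genuine obstacle, since all of the substantive content has already been established in the preceding results; the statement is essentially a formal consequence. The only step meriting an explicit line of justification is the identity $M_L = L$ for extremally disconnected $L$, and this is an immediate application of the maximality part of Proposition~\ref{dm} rather than a separate argument.
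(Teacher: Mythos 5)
Your proof is correct and is precisely the argument the paper leaves implicit: extremal disconnectedness makes $L$ itself the largest dense extremally disconnected sublocale (so $M_L = L$, as the paper already notes in Section~\ref{sect4}), and Corollary~\ref{main.corollary} then gives $L = M_L = B_L$, which is Boolean. No gaps; this matches the intended derivation.
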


\begin{remarks}
(1) The restriction of Corollary~\ref{last.cor.nw} to the spatial case also follows easily from \cite[Lemma~1.2]{PW}, where it is proved that every closed nowhere dense subspace of a metric space can be expressed as the intersection of the closures of two disjoint open sets.
However, we point out that the scope of our result is substantially more general, as metric locales are not necessarily spatial (and in fact they may not have any points at all).\\[2mm]
(2) We do not know whether the additional condition on not having isolated points can be removed. \\[2mm]
(3) Whereas the results in Sections~\ref{sect3} and \ref{sect4} are constructively valid, the results in this section are non-constructive as we have freely used the law of excluded middle. Additionally, the proof of Proposition~\ref{prop.construction} uses Zorn's Lemma. Developing a constructive version of these results is left for future work (cf. the constructive theory of metric locales developed in \cite{metric}).\end{remarks}

\end{document}